\documentclass[12pt, leqno, letterpaper]{article}

\usepackage{ifxetex}

\usepackage[letterpaper]{geometry}

\ifxetex
  \usepackage[T1]{fontenc}
  \usepackage[no-math]{fontspec}

  \usepackage[mtphrd,mtpfrak,slantedGreek,noamssymbols,
  	      subscriptcorrection,zswash]{mtpro2}

  \DeclareMathSizes{12}{12}{9}{7}
\else
  \usepackage[T1]{fontenc}
  \usepackage{lmodern}
  \newcommand{\mbf}{\mathbf} 

  \usepackage{mathrsfs}

\fi
  
\usepackage{amssymb}
\usepackage[scr=boondox, scrscaled=1]{mathalfa}

\usepackage{amsmath}
\usepackage{graphicx}
\usepackage{enumitem}
\usepackage{comment}
\usepackage{url}
\usepackage{marginnote}
\usepackage{booktabs}
\usepackage{bm}
\usepackage{mdframed}
\usepackage{xfrac}
\allowdisplaybreaks

\usepackage{float}
\floatstyle{ruled}
\newfloat{algorithm}{hpt}{lop}
\floatname{algorithm}{Algorithm}

\usepackage{amsthm}
\newtheoremstyle{mytheoremstyle}
{\topsep}                    
{\topsep}                    
{\slshape}                   
{0.32in}                           
{\bfseries}                  
{.}                          
{0.08in}                       
{}  

\theoremstyle{mytheoremstyle}

\makeatletter
\@ifclassloaded{report}{
   \newtheorem{theorem}{Theorem} }
{  \newtheorem{theorem}{Theorem}[section] }
\makeatother

\newtheorem*{theorem*}{Lemma}

\newtheorem{lemma}[theorem]{Lemma}
\newtheorem{corollary}[theorem]{Corollary}

\newtheorem{remark}[theorem]{Remark}

\newtheorem{assumption}[theorem]{Assumption}

\newtheorem*{defn}{Definition}

\renewenvironment{proof}[1][\proofname]{{\scshape #1. }}
{\qed\vspace{0.16in}}

\usepackage{titlesec}
\usepackage{marginnote}

\titleformat{\section}[block]
{\centering\large\bfseries\scshape}
{\thesection.}{0.08in}{}[]
\titlespacing{\section}{0in}{0.16in}{0.16in}

\titleformat{\subsection}[runin]
{\bfseries} 
{\thesubsection.}{0.04in}{}[.]
\titlespacing{\subsection}{0.32in}{0.08in}{0.08in}

\renewcommand{\abstractname}{\bf Abstract}

\renewenvironment{abstract}
{\small
\begin{center}
  \bfseries \abstractname\vspace{0in}\vspace{0pt}
\end{center}
\list{}{
	\setlength{\leftmargin}{0.5in}
    \setlength{\rightmargin}{\leftmargin}
	\itemindent \parindent} 
  \item\relax}
{\endlist}

\usepackage{caption}
\usepackage{hyperref}
\hypersetup{colorlinks=true, allcolors=deluge}

\usepackage{natbib}[mincitenames=4]

\newcommand{\ci}{\citet*}

\global\mdfdefinestyle{clean}{%
linecolor=black,linewidth=0.5pt,%
leftmargin=0.32in,rightmargin=0.32in
}

\usepackage{color}
\usepackage[table]{xcolor}

\definecolor{red}{rgb}{1,0,0}
\definecolor{gray}{rgb}{0.5,0.5,0.5}
\definecolor{darkgray}{rgb}{0.4,0.4,0.4}
\definecolor{blue}{rgb}{0,0,1}
\definecolor{green}{rgb}{0,1,0}

\definecolor{deluge}{RGB}{124, 113, 173}
\definecolor{bamboo}{RGB}{220, 92, 5}
\definecolor{yellow}{RGB}{255, 172, 0}
\definecolor{orange}{RGB}{255, 144, 0}
\definecolor{oyster}{RGB}{151, 139, 125}
\definecolor{coral}{RGB}{199, 186, 167}
\definecolor{downy}{RGB}{110, 197, 184}

\newcommand{\diag}{\textbf{diag}}

\usepackage{pgffor}

\foreach \x in {A,B,...,Z,a,b,...,z} 
{
  \expandafter\xdef\csname cal\x\endcsname{\noexpand 
	\ensuremath{\noexpand\mathcal{\x}}}
  \expandafter\xdef\csname scr\x\endcsname{\noexpand 
	\ensuremath{\noexpand\mathscr{\x}}}
  \expandafter\xdef\csname bb\x\endcsname{\noexpand 
	\ensuremath{\noexpand\mathbb{\x}}}
  \expandafter\xdef\csname rm\x\endcsname{\noexpand 
	\ensuremath{\noexpand\mathrm{\x}}}
  \expandafter\xdef\csname bf\x\endcsname{\noexpand 
	\ensuremath{\noexpand\mbf{\x}}}
}

\ifxetex
\let\gamma\upgamma
\let\epsilon\upepsilon
\let\tau\uptau
\let\pi\uppi
\let\kappa\upkappa
\let\omega\upomega
\fi

\newcommand{\E}{\calE}

\newcommand{\Y}{Y}

\usepackage{xcolor}

\title{{\Large \textbf{\textsc{Recovery Limits for Eigenvector Alignment in HDLSS Spiked Covariance Models}}}}

\author{
Hubeyb Gurdogan\footnote{Dept. of Mathematics,
University of California, Los Angeles, CA
({\tt hgurdogan@math.ucla.edu}).}
\hspace{0.32in} and \hspace{0.32in}
Alex Shkolnik\footnote{Dept. of Statistics
and Applied Probability, University of California,
Santa Barbara, CA and the Consortium for Data Analytics in Risk,
University of California, Berkeley, CA ({\tt shkolnik@ucsb.edu}).}
}

\date{{\normalsize This version: September 18, 2026.}}

\begin{document}

\ifxetex
  \let\lsum\sum
  \renewcommand{\sum}{\bm{\lsum}}

  \let\lprod\prod
  \renewcommand{\prod}{\bm{\lprod}}
\else
\fi

\maketitle
\thispagestyle{empty}
\pagenumbering{arabic}

\begin{abstract}
In a spiked covariance model, let \(\mathcal H\) and \(\mathcal B\) be selected
sample and population eigenvector matrices.  Their alignment matrix
\(\Xi=\mathcal H^\top\mathcal B\) is a latent oracle diagnostic of
direction-specific correspondence beyond subspace overlap.  We study its
uniform pathwise recovery in a nested high-dimensional, low-sample-size
experiment with fixed sample size and growing dimension.  At every fixed
model, an observable spectral contrast consistently estimates the row Gram
matrix \(\Xi\Xi^\top\).  For two or more spikes, however, no measurable
estimator sequence achieves uniformly reliable almost-sure path recovery of
\(\Xi\) over a family having common spike strengths and a common signal
subspace but different ordered eigenframes.  For every
\(0<\epsilon<1/\sqrt2\), a finite orbit forces each estimator to have some
orbit point at which the approximate path-recovery probability is at most a
variational bound \(u(\epsilon)\), with
\(\lim\limits_{\epsilon\downarrow0}u(\epsilon)=0\).  The worst-model
exact-recovery probability over a countable orbit is zero.
Thus the Gram summary is recoverable model by model, while the full alignment
matrix is not uniformly recoverable along the nested path.\footnote{\textbf{Keywords.} Covariance estimation, HDLSS
asymptotics, sample eigenvector alignment, spiked covariance model, spectral
methods, principal component analysis.}
\end{abstract}

\newpage
\setcounter{page}{1}

\section{Introduction}
\label{sec:intro}
Large covariance estimation is a central problem in high-dimensional
statistics and multivariate analysis.  Spiked covariance models give one of the
standard mathematical abstractions: a small number of leading eigenvalues
separate from a high-dimensional background, and the corresponding population
eigenvectors describe the dominant directions of variation.  This framework
appears in factor analysis, principal component analysis, signal processing,
and covariance estimation; see, for example, \ci{johnstone2001}, \ci{paul2007},
\ci{yao2015}, \ci{fan2008}, \ci{fan2016a}, \ci{wang2017}, and \ci{lam2020}.  In
approximate factor models, under pervasiveness, principal-component methods
recover the loading space, while the ordered covariance eigenvectors provide a
canonical spectral frame within that space.  Assessing alignment with the
individual ordered eigenvectors is therefore a finer question than assessing
recovery of their span
\citep{fan2013,fan2016b,bai2013}.

Let \(\mathcal B\) denote the selected population eigenvector matrix and let
\(\mathcal H\) denote the selected sample eigenvector matrix.  Under the
deterministic convention specified below, these are the fixed objects
\(\mathcal B=\nu(\Sigma)\) and \(\mathcal H=\nu(S)\), not bases chosen after
the fact for their respective column spaces.  The matrix
\[
  \Xi=\mathcal H^\top \mathcal B
\]
records the alignment between sample and population directions.  Its diagonal
entries measure how much each sample direction aligns with its corresponding
population direction, while its off-diagonal entries measure mixing across
different spike directions.  This object contains finer information than a
subspace distance or a canonical-angle summary.  The Gram matrix
\(\Xi\Xi^\top=\mathcal H^\top\mathcal B\mathcal B^\top\mathcal H\) sums over
the selected population eigenvectors and therefore retains only their
aggregate contribution to each pair of sample eigenvectors.  It does not
retain the individual entries of \(\Xi\).  Thus \(\Xi\) records the full
ordered pairwise correspondence between the selected sample and population
eigenvectors, whereas \(\Xi\Xi^\top\) is a coarser alignment summary.
Because the selected population eigenframe is unknown, \(\Xi\) is latent and
can be viewed as an oracle diagnostic of direction-specific PCA error.  We ask
whether it can nevertheless be recovered from the observed data in the HDLSS
regime, where \(n\) is fixed and \(p\) tends to infinity.

The HDLSS literature shows that PCA has behavior qualitatively different from
the proportional high-dimensional regime.  Foundational geometric results go
back to \ci{hall2005} and \ci{ahn2007}; consistency, strong inconsistency, subspace
consistency, and boundary behavior were developed in \ci{jung2009pca},
\ci{jung2012}, \ci{yata2009}, \ci{yata2013}, and \ci{shen2016}, with a broad
survey in \ci{aoshima2018}.  Related proportional-asymptotic results describe
phase transitions and limiting sample--population eigenvector overlap
\citep{paul2007}.  These works establish asymptotic geometry and consistency
properties of PCA and its modifications.  Those results do not by themselves
settle uniform recovery of the full realized alignment matrix \(\Xi\) in the
fixed-sample model family studied here.

In the proportional regime, \citet{lin2026eigenvector} show that generalized
eigenvector overlaps converge to deterministic counterparts, with rates, when
both dimensions grow.  Here the sample size is fixed and the target is the
realized random cross-alignment matrix rather than a deterministic equivalent.
In the fixed-sample single-spike setting, \citet{goldberg2022dispersion} recover
the squared overlap by a spectral contrast and then recover the signed scalar
alignment under a sign convention, whereas our obstruction uses multi-spike
orientation freedom.  \citet{jung2022} studies correction of rotation and
scaling bias in HDLSS principal-component scores under additional assumptions,
a related but different inferential target.  Structured sparsity can restore
HDLSS principal-direction consistency \citep{shen2013}; the orthogonal orbit
studied here is unstructured and does not preserve direction-specific sparsity.

The classical non-identifiability of factor models arises because, for every
\(Q\in O(q)\),
\[
  Y=LF^\top+E=(LQ)(FQ)^\top+E.
\]
Thus \(L\) and \(F\) are not separately identified from the observed data
without additional restrictions; see \ci{shapiro1985} and \ci{bai2013}.  This ambiguity does
not affect the quantities studied here.  Because the transformation leaves
\(Y\) unchanged, it also leaves
\[
  S=\frac1nYY^\top,
  \qquad
  \Sigma=\mathbb E[S]
\]
unchanged.  Consequently, the selected population eigenvector matrix
\(\mathcal B=\nu_{p\times q}(\Sigma)\) is invariant under the choice of
factor rotation \(Q\).  Since \(\Sigma\) is determined by the data law and
\(\nu_{p\times q}\) is a fixed deterministic measurable map, \(\mathcal B\)
is an identified population quantity.  The selected sample eigenvector
matrix \(\mathcal H=\nu_{p\times q}(S)\), and hence the alignment matrix
\(\Xi=\mathcal H^\top\mathcal B\), are likewise unchanged by the factor
rotation.  Therefore \(\Xi\) survives this classical non-identifiability:
our result concerns estimation of a well-defined eigenvector-alignment
quantity from the observed data in the fixed-sample HDLSS regime.

In the critical spike regime considered here, the leading population
eigenvalues grow linearly with \(p\),
so the sample eigenvectors are neither classically consistent nor completely
orthogonal to the population eigenspace.  For the family indexed by
\(\scrO\in O(q)\) in Section~\ref{sec:imp}, we first construct an observable
spectral-contrast matrix \(\Psi_{p,\scrO}\) from the eigenvalues of
\(S_{p,\scrO}\) and prove that
\[
  \lim\limits_{p\uparrow\infty}
  \|\Xi_{p,\scrO}\Xi_{p,\scrO}^\top-\Psi_{p,\scrO}\|_2=0
  \qquad\text{almost surely}.
\]
This convergence is established in Theorem~\ref{thm: quadopt}.
The smallest eigenvalue of this benchmark defines the limiting alignment scale
\[
  \psi_{\min}:=
  \lim\limits_{p\uparrow\infty}
  \lambda_{\min}(\Psi_{p,\scrO})^{1/2}>0
  \qquad\text{almost surely}.
\]
This scale is random and common to the model family.  We then show that the
full \(\Xi_{p,\scrO}\) does not admit uniformly reliable almost-sure path
recovery over the model family.
More precisely, for every \(0<\epsilon<1/\sqrt2\), there is a finite set
\(\mathbb G_\epsilon\subset O(q)\) such that every measurable estimator
sequence \(f=(f_p)\) satisfies
\begin{equation}
  \inf_{\scrO\in\mathbb G_\epsilon}
  \mathbb P\!\left(
  \limsup\limits_{p\uparrow\infty}
  \|\Xi_{p,\scrO}-f_p(\Y_{p,\scrO})\|_2
  \leq\epsilon\psi_{\min}
  \right)
  \leq u(\epsilon),
  \label{eq:intro-recovery-bound-v4}
\end{equation}
where \(\lim\limits_{\epsilon\downarrow0}u(\epsilon)=0\), as established in
Theorem~\ref{thm: approximate-recovery-bound}.  Over a countable orbit,
the corresponding infimum for exact recovery is zero
(Corollary~\ref{thm:anglesgeneralized}).

Corollary~\ref{cor:population-eigenspace-bound} also rules out uniformly
reliable recovery of the selected population eigenvector matrix
\(\mathcal B_{p,\scrO}\) by any measurable estimator, since such recovery
would induce an estimator of \(\Xi_{p,\scrO}\) by left multiplication with
the observed \(\mathcal H_{p,\scrO}^{\top}\).  Over one fixed countable set
\(\mathbb G_0\), every estimator sequence has infimum zero for the probability
of asymptotically exact recovery, even when every component of the model
family except \(\scrO\) is known.  Thus the common population signal subspace
is known, while its ordered eigenframe remains to be recovered; the conclusion
therefore goes beyond inconsistency of sample PCA. Related estimator-independent limitations were established by \citet{wahl2022lower}, who derives nonasymptotic lower bounds for Gaussian
covariance eigenprojection estimation under a prescribed spectrum. When bounded away from zero, these bounds preclude uniformly vanishing estimation error.  Thus, the quantification over all measurable estimators is not by itself the distinguishing feature of the Corollary~\ref{cor:population-eigenspace-bound}. Rather, it rules out uniformly reliable asymptotically exact recovery of the deterministically selected ordered eigenvector matrix $\mathcal B_{p,\scrO}$ along the entire dimension-indexed sequence, with fixed sample size, over a fixed countable family allowing non-Gaussian scores and noise, rather than bounding expected eigenprojection loss in a Gaussian experiment.

The paper therefore establishes a separation between two levels of alignment
information: at every fixed model the data recover alignment strength, encoded
by \(\Xi\Xi^\top\), but no single estimator achieves uniformly reliable
almost-sure path recovery of the full ordered cross-alignment matrix \(\Xi\)
over the orbit.

The rest of the paper is organized as follows.  Section~\ref{sec:imp}
introduces the orbit-indexed HDLSS model, states the spectral contrast result,
and gives the main approximate and exact path-recovery bounds.  The appendix
gives the precise eigenvector-selection convention, proofs of the main
theorems, and the auxiliary lemmas used in those proofs.

\section{Model and Main Results}
\label{sec:imp}
\subsection{The orbit-indexed HDLSS model}

Fix integers \(n>q\geq2\)  For each dimension
\(p\geq q\), let
\(\scrB_p\in\mathbb R^{p\times q}\) satisfy
\[
  \scrB_p^\top \scrB_p=pI_q .
\]
Thus the columns of \(p^{-1/2}\scrB_p\) are orthonormal, while the signal
loadings themselves have pervasive size \(\sqrt p\).  Fix
\(\scrW\in O(q)\) and fix
\(\Lambda=\diag(\lambda_1,\ldots,\lambda_q)\), where
\[
  \lambda_1>\lambda_2>\cdots>\lambda_q>0 .
\]
Let \(X\in\mathbb R^{n\times q}\) be random and let
\(\E_\infty\in\mathbb R^{\mathbb Z_+\times n}\) be an infinite noise array.
For each \(p\geq q\), \(\E_p\in\mathbb R^{p\times n}\) denotes the first
\(p\) rows of \(\E_\infty\).  Define
\[
  M:=X\scrW\Lambda\in\mathbb R^{n\times q}.
\]
For a set \(\mathbb G\subset O(q)\), the orbit-indexed data family is
\begin{align}
\mathcal D(\mathbb G)
:=
\left\{
(\Y_{p,\scrO})_{p\geq q}:
\Y_{p,\scrO}=\scrB_p\scrO M^\top+\E_p,\ \scrO\in\mathbb G
\right\}.
\label{eq:datafamily-v4}
\end{align}
The only free parameter in the family is \(\scrO\).  Because the spike
strengths are distinct, varying \(\scrO\) generally changes the population
covariance matrix; the orbit therefore indexes distinct statistical models,
not alternative bases for one model.

All random quantities are defined on one probability space carrying
\((M,\E_\infty)\), whose probability measure is denoted by \(\mathbb P\).
The index \(\scrO\in O(q)\) is deterministic.  For each fixed \(\scrO\),
the observation path \((\Y_{p,\scrO})_{p\geq q}\) is a function of these
common random coordinates; probabilities of recovery are evaluated separately
for each such fixed index.

\begin{assumption}\label{asm:asymp}
The random matrix \(X\) and the noise array \(\E_\infty\) satisfy the
following conditions.
\begin{enumerate}[label=\textup{(\alph*)}, itemsep=0.0in]
\item For some \(\delta>0\), \(X\) and \(\E_\infty\) are independent,
have mean zero, and
\[
  \mathbb E(X^\top X)/n=I_q,
  \qquad
  \mathbb E(\E_p\E_p^\top)/n=\delta^2I_p
  \quad\text{for every }p\geq q .
\]
\label{asm:meancov}
\item The random matrix \(M=X\scrW\Lambda\) has a positive continuous
density \(g\) on \(\mathbb R^{n\times q}\).
\label{asm:nonzeropdf}
\item Almost surely,
\[
  \lim\limits_{p\uparrow\infty}\frac1p\E_p^\top\E_p=\delta^2 I_n,
  \qquad
  \lim\limits_{p\uparrow\infty}\frac1p\scrB_p^\top\E_p=0 .
\]
\label{asm:SLLN}
\end{enumerate}
\end{assumption}

Let
\[
  S_{p,\scrO}:=\frac1n\Y_{p,\scrO}\Y_{p,\scrO}^\top
\]
be the sample second moment matrix.  Under Assumption~\ref{asm:asymp}(a),
the corresponding population second moment is
\begin{align}
  \Sigma_{p,\scrO}
  &:=
  \mathbb E[S_{p,\scrO}]
   =
  \scrB_p\scrO\Lambda^2\scrO^\top\scrB_p^\top+\delta^2I_p .
  \label{eq:expectationofS-v4}
\end{align}
The leading \(q\) eigenvalues of \(\Sigma_{p,\scrO}\) are
\(p\lambda_1^2+\delta^2,\ldots,p\lambda_q^2+\delta^2\), and are simple.
Thus the model is a spiked covariance model in which the spike eigenvalues
grow linearly with \(p\).

\subsection{Eigenvector selection}

For each \(m\geq q\) and \(\ell\geq1\), we fix once and for all a Borel
measurable selector
\[
  \nu_{m\times q}:\mathbb R^{m\times \ell}\to\mathbb R^{m\times q}
\]
which returns an orthonormal basis of the left singular subspace associated
with the largest \(q\) singular values.  For a symmetric positive
semidefinite matrix, the same notation denotes the selected leading
eigenvectors.  On the event that the relevant singular values are simple,
\(\nu_{m\times q}\) is just the usual ordered singular-vector map with a
fixed deterministic sign convention.  Its definition at multiplicities is
needed only to make all estimator maps measurable; no argument below depends
on arbitrary choices off the simple-spectrum event.  Appendix~\ref{sec:appendix}
records the precise convention used in the proof.
On the eventual simple-spectrum event, any two ordered measurable selectors
differ only through diagonal sign matrices. Define
\[
  \mathcal H_{p,\scrO}:=\nu_{p\times q}(S_{p,\scrO}),
  \qquad
  \mathcal B_{p,\scrO}:=\nu_{p\times q}(\Sigma_{p,\scrO}).
\]
Here \(\scrB_p\) is the fixed base loading matrix, whereas
\(\mathcal B_{p,\scrO}\) is the selected orthonormal population eigenframe.
The alignment matrix is
\begin{align}
  \Xi_{p,\scrO}
  :=
  \mathcal H_{p,\scrO}^{\top}\mathcal B_{p,\scrO}
  \in\mathbb R^{q\times q}.
  \label{eq:alignment-v4}
\end{align}
Its \((i,j)\) entry is the inner product between the \(i\)th selected sample
eigenvector and the \(j\)th selected population eigenvector.

\subsection{What is estimable}

The full alignment matrix contains \(q^2\) ordered pairwise overlaps.  Its
Gram-type summary
\[
  \Xi_{p,\scrO}\Xi_{p,\scrO}^\top
  =
  \mathcal H_{p,\scrO}^{\top}
  \mathcal B_{p,\scrO}\mathcal B_{p,\scrO}^{\top}
  \mathcal H_{p,\scrO}
\]
sums over the selected population eigenvectors and retains only their
aggregate contribution to the sample eigenvectors.  This coarser summary is
recoverable almost surely along the nested path from the sample spectrum.

\begin{defn}[Squared spectral contrast]\label{def:spectral_contrast_matrix}
Let \(S\in\mathbb R^{p\times p}\) be positive semidefinite.  Let
\(s_1^2\geq s_2^2\geq\cdots\) be its nonzero eigenvalues and let
\(n_+\) be the number of nonzero eigenvalues.  For every positive integer
\(k\), define the squared spectral contrast
\(C^2(S,k)\in\mathbb R^{k\times k}\) by
\begin{equation}
  C^2(S,k)
  :=
  \begin{cases}
  I_k
  -
  \dfrac{\sum_{j>k}s_j^2}{n_+-k}
  \diag(s_1^{-2},\ldots,s_k^{-2}),
  & n_+>k,\\[6pt]
  0_k,
  & n_+\leq k.
  \end{cases}
  \label{eq:def-spectral-contrast-v4}
\end{equation}
\end{defn}

The next theorem provides the estimable benchmark for our main negative
result.  A version of this spectral-contrast limit was derived in our earlier
unpublished manuscript on quadratic optimization bias
\citep{gurdogan2024quadratic}.  We restate and prove it here, in the notation
of the present model, so that the separation between recovery of the row Gram
matrix \(\Xi_{p,\scrO}\Xi_{p,\scrO}^{\top}\) and non-recovery of the full
alignment matrix \(\Xi_{p,\scrO}\) is entirely self-contained.

\begin{theorem}[Spectral contrast for the alignment Gram matrix]\label{thm: quadopt}
Under Assumption~\ref{asm:asymp}, let
\[
  \Psi_{p,\scrO}:=C^2(S_{p,\scrO},q).
\]
Then, for every fixed \(\scrO\in O(q)\),
\begin{equation}
  \lim\limits_{p\uparrow\infty}
  \left\|
  \Xi_{p,\scrO}\Xi_{p,\scrO}^\top-\Psi_{p,\scrO}
  \right\|_2=0
  \qquad\text{almost surely.}
  \label{eq:gram-estimable-v4}
\end{equation}
Moreover, each diagonal entry of \(\Psi_{p,\scrO}\) eventually lies in
\((0,1)\) almost surely.
\end{theorem}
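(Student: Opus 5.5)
The plan is to work throughout with the \(n\times n\) dual Gram matrix \(p^{-1}\Y_{p,\scrO}^\top\Y_{p,\scrO}\), which has the same nonzero spectrum as \(n p^{-1}S_{p,\scrO}\) and whose eigenvectors determine the sample eigenvectors through the SVD of \(\Y_{p,\scrO}\). Expanding,
\[
  \frac1p\Y_{p,\scrO}^\top\Y_{p,\scrO}
  = M\scrO^\top\frac{\scrB_p^\top\scrB_p}{p}\scrO M^\top
  + M\scrO^\top\frac{\scrB_p^\top\E_p}{p}
  + \frac{\E_p^\top\scrB_p}{p}\scrO M^\top
  + \frac{\E_p^\top\E_p}{p}.
\]
By Assumption~\ref{asm:asymp}(c) and \(\scrB_p^\top\scrB_p=pI_q\), this converges almost surely to \(MM^\top+\delta^2 I_n\). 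Write the SVD \(M=U D V^\top\) with \(D=\diag(\sigma_1,\dots,\sigma_q)\) and \(U\in\mathbb R^{n\times q}\). By Assumption~\ref{asm:asymp}(b), \(M\) has a density and \(n>q\), so almost surely \(\sigma_1>\dots>\sigma_q>0\). This holds because the set of rank-deficient matrices, or matrices with repeated singular values, is Lebesgue-null.

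Hence, by Weyl's inequality, almost surely the nonzero eigenvalues \(s_j^2\) of \(S_{p,\scrO}\) satisfy
\[
  \frac{n s_j^2}{p}\to\sigma_j^2+\delta^2 \quad (j\le q),
  \qquad
  \frac{n s_j^2}{p}\to\delta^2 \quad (q<j\le n).
\]
In particular \(n_+=n>q\) eventually. Plugging these limits into Definition~\ref{def:spectral_contrast_matrix} gives
\[
  \Psi_{p,\scrO}\to \diag\!\Big(\frac{\sigma_i^2}{\sigma_i^2+\delta^2}\Big)_{i\le q}
  \quad\text{a.s.},
\]
and each limit entry lies in \((0,1)\), which yields the final claim of the theorem.

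Next I would identify the limit of \(\Xi_{p,\scrO}\Xi_{p,\scrO}^\top\). Since the leading \(q\) eigenvalues of \(\Sigma_{p,\scrO}\) are separated from \(\delta^2\), the columns of \(\mathcal B_{p,\scrO}\) span \(\operatorname{col}(\scrB_p)\), so \(\mathcal B_{p,\scrO}\mathcal B_{p,\scrO}^\top=p^{-1}\scrB_p\scrB_p^\top\) regardless of the sign or ordering convention of \(\nu\). Therefore
\[
  \Xi\Xi^\top=G_p^\top G_p,
  \qquad
  G_p:=p^{-1/2}\scrB_p^\top\mathcal H_{p,\scrO}.
\]
Let \(v_i^{(p)}\) be the unit eigenvectors of \(p^{-1}\Y^\top\Y\). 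For \(i\le q\) the \(i\)th sample eigenvector is
\[
  h_i=\frac{\Y v_i^{(p)}}{\|\Y v_i^{(p)}\|},
  \qquad
  \|\Y v_i^{(p)}\|=\sqrt{n p}\,s_i ,
\]
up to a sign fixed by \(\nu\). Then
\[
  G_p e_i=\pm\frac{p^{-1}\scrB_p^\top\Y\, v_i^{(p)}}{\sqrt{n s_i^2/p}},
  \qquad
  p^{-1}\scrB_p^\top\Y=\scrO M^\top+p^{-1}\scrB_p^\top\E_p\to\scrO M^\top .
\]

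The key step, and the main obstacle, is eigenvector convergence: \(v_i^{(p)}\) must be shown to converge to \(\pm u_i\), the \(i\)th column of \(U\). I would use Davis--Kahan applied to the almost sure matrix convergence above. This is legitimate because the limit \(MM^\top+\delta^2I_n\) has simple top \(q\) eigenvalues \(\sigma_i^2+\delta^2\), each separated from the rest by a positive random gap. The statement is then pathwise: on the almost-sure event, for each \(i\le q\) there are signs \(\epsilon_i^{(p)}\in\{\pm1\}\) with \(v_i^{(p)}-\epsilon_i^{(p)}u_i\to0\). Using \(M^\top u_i=\sigma_i V e_i\), this gives
\[
  G_p=\scrO V\,\diag\!\Big(\frac{\sigma_i}{\sqrt{\sigma_i^2+\delta^2}}\Big)E_p+o(1),
\]
where \(E_p\) is a diagonal sign matrix absorbing both \(\epsilon_i^{(p)}\) and the selector's signs.

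Hence
\[
  G_p^\top G_p
  =E_p\,\diag\!\Big(\frac{\sigma_i^2}{\sigma_i^2+\delta^2}\Big)E_p+o(1)
  =\diag\!\Big(\frac{\sigma_i^2}{\sigma_i^2+\delta^2}\Big)+o(1),
\]
because \(\scrO V\) is orthogonal and a diagonal matrix is unchanged by conjugation with a diagonal sign matrix. The sign ambiguity of the selector therefore disappears. Comparing with the limit of \(\Psi_{p,\scrO}\) proves \eqref{eq:gram-estimable-v4}. Note that \(\scrO\) drops out entirely, which is consistent with the later claim that the limiting scale \(\psi_{\min}\) is common to the whole family.
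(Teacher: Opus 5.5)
Your proposal is correct, but it takes a different route from the paper.

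\textbf{Your route.} You compute the almost-sure limit of each side separately. Weyl's inequality gives the limit of \(\Psi_{p,\scrO}\). For \(\Xi_{p,\scrO}\Xi_{p,\scrO}^\top=G_p^\top G_p\), you apply Davis--Kahan to \(p^{-1}\Y_{p,\scrO}^\top\Y_{p,\scrO}\to MM^\top+\delta^2I_n\), get convergence of the individual right singular vectors up to sign, and match both sides to \(\diag(\sigma_i^2/(\sigma_i^2+\delta^2))\).

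\textbf{The paper's route.} The paper never identifies a limit for \(\Xi_{p,\scrO}\Xi_{p,\scrO}^\top\) and never uses eigenvector convergence. It writes \(\mathcal H_{p,\scrO}=p^{-1/2}\Y_{p,\scrO}V_{p,\scrO}D_{p,\scrO}^{-1/2}\) and uses the exact identity \((I_p-P_{p,\scrO})\Y_{p,\scrO}=(I_p-P_{p,\scrO})\E_p\). This gives
\[
p^{-1}\Y^\top P\Y=K-p^{-1}\E_p^\top\E_p+p^{-1}\E_p^\top P\E_p .
\]
Combined with \(V^\top KV=D\), it yields the finite-\(p\) representation
\[
\Xi\Xi^\top-\Psi=D^{-1/2}V^\top\mathcal R\,VD^{-1/2},
\qquad \mathcal R\to0 .
\]
This needs only that \(d_{q,p,\scrO}\) stays bounded away from zero, i.e.\ \(\operatorname{rank}(M)=q\).

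\textbf{Comparison.} The paper's route requires no simplicity of the singular values of \(M\), no eigengap argument, and no bookkeeping of the selector's or Davis--Kahan's signs. It also expresses the error as an explicit remainder, which would pass on convergence rates for the noise averages. Your route yields more explicit information: the common limit is diagonal and does not depend on \(\scrO\). The cost is a heavier use of the density assumption, since you need almost surely distinct \(\sigma_i\). If \(M\) could have repeated singular values, you would have to replace individual eigenvectors by cluster spectral projectors.

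\textbf{Minor slip.} \(\|\Y v_i^{(p)}\|=\sqrt{n}\,s_i\), not \(\sqrt{np}\,s_i\). Your displayed formula for \(G_pe_i\) already agrees with the correct value, so nothing downstream changes.
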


Theorem~\ref{thm: quadopt} identifies exactly what the observable spectral
contrast recovers: the row Gram matrix of the model-specific target
\(\Xi_{p,\scrO}\).  This determines aggregate alignment strengths and row
inner products, but not the individual entries of \(\Xi_{p,\scrO}\).  The main
result shows that no measurable estimator can achieve uniformly reliable
almost-sure path recovery of the remaining ordered pairwise alignment
information over the orbit of distinct covariance models.  The proof of
Theorem~\ref{thm: quadopt} is given in
Appendix~\ref{app:spectral-contrast-proof-v4}.

\subsection{Model-wise recovery and the main theorem}

We first define the scale \(\psi_{\min}\) appearing in the introductory
recovery bound \eqref{eq:intro-recovery-bound-v4}.  For every
\(m\in\mathbb R^{n\times q}\), let
\[
  \psi_{\min}(m)
  :=
  \left(
  \frac{\lambda_{\min}(m^\top m)}
       {\lambda_{\min}(m^\top m)+\delta^2}
  \right)^{1/2},
  \qquad
  \psi_{\min}:=\psi_{\min}(M).
\]
Assumption~\ref{asm:asymp} gives, for every fixed \(\scrO\),
\[
  \lim\limits_{p\uparrow\infty}p^{-1}\Y_{p,\scrO}^\top\Y_{p,\scrO}
  =MM^\top+\delta^2I_n
  \qquad\text{almost surely}.
\]
Since the positive continuous density of \(M\) and \(n>q\) imply
\(\operatorname{rank}(M)=q\) almost surely, the spectral contrast limit yields
\begin{equation}
  \lim\limits_{p\uparrow\infty}
  \lambda_{\min}(\Psi_{p,\scrO})^{1/2}
  =
  \psi_{\min}
  =
  \left(
  \frac{\lambda_{\min}(M^\top M)}
       {\lambda_{\min}(M^\top M)+\delta^2}
  \right)^{1/2}
  >0
  \qquad\text{almost surely}.
  \label{eq:psi-min-v4}
\end{equation}
Equation~\eqref{eq:psi-min-v4} shows that \(\psi_{\min}\) is common to every
orbit-indexed model path. Let \((f_p)_{p\geq q}\) be a sequence of nonrandom measurable estimators
with
\[
  f_p:\mathbb R^{p\times n}\to\mathbb R^{q\times q}
\]
for \(p\geq q\).  For a fixed orbit point \(\scrO\in O(q)\) and
\(\epsilon\geq0\), define the model-specific event
\begin{align}
A_{f,\epsilon}(\scrO)
:=
\Bigl\{
\limsup\limits_{p\uparrow\infty}
\left\|
\Xi_{p,\scrO}-f_p(\Y_{p,\scrO})
\right\|_2
\leq
\epsilon\psi_{\min}
\Bigr\}.
\label{eq:AfO-v4}
\end{align}
We write \(A_f(\scrO):=A_{f,0}(\scrO)\) for exact path recovery under that
model.  The quantity
\(\mathbb P(A_{f,\epsilon}(\scrO))\) is the ordinary success
probability when the parameter \(\scrO\) is fixed.

\begin{theorem}[Main approximate path-recovery bound]\label{thm: approximate-recovery-bound}
Assume \(q\geq2\) and \(0<\epsilon<1/\sqrt2\).  Let
\begin{equation}
  N(\epsilon)
  :=
  \left\lceil\frac{\pi}{2\arcsin(\epsilon)}\right\rceil-1.
  \label{eq:packing-size-v4}
\end{equation}
There exists a set \(\mathbb G_\epsilon\subset O(q)\) with
\(|\mathbb G_\epsilon|=N(\epsilon)\) such that, for every sequence
\((f_p)_{p\geq q}\) of measurable estimators
\(f_p:\mathbb R^{p\times n}\to\mathbb R^{q\times q}\),
\begin{equation}
  \inf_{\scrO\in\mathbb G_\epsilon}
  \mathbb P\bigl(A_{f,\epsilon}(\scrO)\bigr)
  \leq
  u(\epsilon),
  \label{eq: approx-theorem-bound}
\end{equation}
where
\begin{equation}
u(\epsilon)
:=
\min\left\{1,
\inf_K
\left\{
\mathbb P(M\notin K)
+
\left(
\sum_{\scrO\in\mathbb G_\epsilon}
\inf_{m\in K}
\frac{g(m\scrO^\top)}{g(m)}
\right)^{-1}
\right\}\right\},
\label{eq: u-epsilon}
\end{equation}
where the infimum is taken over all nonempty compact sets
\(K\subset\mathbb R^{n\times q}\).
Moreover, \(\lim\limits_{\epsilon\downarrow0}u(\epsilon)=0\).
\end{theorem}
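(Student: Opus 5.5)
The plan is to exploit an exact symmetry of the model: changing the orbit index \(\scrO\) is the same as reparametrizing the latent score matrix \(M\), while the target \(\Xi\) changes by right multiplication by \(\scrO\). The data alone cannot distinguish these cases, which yields a packing and change-of-measure argument.

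\emph{Step 1 (transport identity).} Since \(\scrO M^\top=(M\scrO^\top)^\top\), we have pathwise
\[
  \Y_{p,\scrO}(M,\E_\infty)=\Y_{p,I}(M\scrO^\top,\E_\infty).
\]
Hence \(S_{p,\scrO}(M,\E)=S_{p,I}(M\scrO^\top,\E)\) and \(\mathcal H_{p,\scrO}(M,\E)=\mathcal H_{p,I}(M\scrO^\top,\E)\). By \eqref{eq:expectationofS-v4}, the leading eigenvectors of \(\Sigma_{p,\scrO}\) are the columns of \(p^{-1/2}\scrB_p\scrO\), up to the selector's sign convention. Thus \(\mathcal B_{p,\scrO}=p^{-1/2}\scrB_p\scrO D_{p,\scrO}\) with \(D_{p,\scrO}\) a diagonal sign matrix. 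This gives
\[
  \Xi_{p,\scrO}(M,\E)=\Xi_{p,I}(M\scrO^\top,\E)\,D_{p,I}\scrO D_{p,\scrO}.
\]
Also \(\psi_{\min}(m\scrO^\top)=\psi_{\min}(m)\), because \(\lambda_{\min}\) of \(m^\top m\) is orthogonally invariant.

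\emph{Step 2 (separation).} By Theorem~\ref{thm: quadopt} and \eqref{eq:psi-min-v4}, \(\sigma_{\min}(\Xi_{p,I})\to\psi_{\min}\) almost surely. So for distinct \(\scrO,\scrO'\) in the orbit, on the common coordinates \(m'=M\scrO^\top\),
\[
  \liminf_{p}\|\Xi_{p,I}(m')(D\scrO D'-D\scrO'D'')\|_2\geq\psi_{\min}(m')\min_{D_1,D_2}\|\scrO-D_1\scrO'D_2\|_2,
\]
where \(D,D',D''\) are the sign matrices from Step 1 and \(D_1,D_2\) range over diagonal sign matrices. I will choose \(\mathbb G_\epsilon\) to consist of rotations \(R(k\theta)\) acting in the first coordinate plane (identity elsewhere), with \(\theta\) slightly larger than \(2\arcsin\epsilon\), and \(k=0,\dots,N(\epsilon)-1\). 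The restriction \(k\theta<\pi\), together with the quotient by sign flips (which identifies angles modulo \(\pi\) up to reflection), should give
\[
  \min_{D_1,D_2}\|\scrO-D_1\scrO'D_2\|_2\geq 2\sin(\theta/2)>2\epsilon .
\]
This is where \(N(\epsilon)\) and the condition \(\epsilon<1/\sqrt2\) enter. I expect this sign bookkeeping to be the main obstacle. I would first try to show that the selector's sign convention makes \(D_{p,\scrO}\) eventually constant, and then verify the distance modulo signs for planar rotations explicitly via the \(2\times2\) block computation. Given the separation, the triangle inequality shows that the success sets
\[
  T_\scrO:=\bigl\{(m',\E):\limsup_p\|\Xi_{p,I}(m')D\scrO D'-f_p(\Y_{p,I}(m',\E))\|_2\leq\epsilon\psi_{\min}(m')\bigr\}
\]
are pairwise disjoint up to null sets. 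This works because the estimator sees the identical data \(\Y_{p,I}(m',\E)\) under every orbit point, and \(A_{f,\epsilon}(\scrO)=\{(M\scrO^\top,\E)\in T_\scrO\}\).

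\emph{Step 3 (change of measure).} Using independence of \(M\) and \(\E\), write
\[
  \mathbb P(A_{f,\epsilon}(\scrO))=\int\mathbf 1_{T_\scrO}(m',e)\,g(m'\scrO)\,dm'\,d\mathbb P_\E(e).
\]
Substituting back \(m=m'\scrO\) and restricting to \(\{M\in K\}\), we get
\[
  \mathbb P(A_{f,\epsilon}(\scrO))-\mathbb P(M\notin K)\leq \Bigl(\inf_{m\in K}\tfrac{g(m\scrO^\top)}{g(m)}\Bigr)^{-1}\,\mathbb P\bigl((M,\E)\in T_\scrO,\ M\in K\bigr).
\]
Here the roles of \(\scrO\) and \(\scrO^\top\) are adjusted, using that \(\mathbb G_\epsilon\) can be taken closed under inversion, or by relabelling. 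Let \(r:=\min_\scrO\mathbb P(A_{f,\epsilon}(\scrO))\). Multiplying each inequality by its ratio and summing over the \(N(\epsilon)\) disjoint sets gives
\[
  (r-\mathbb P(M\notin K))\sum_{\scrO\in\mathbb G_\epsilon}\inf_{m\in K}\frac{g(m\scrO^\top)}{g(m)}\leq1 .
\]
Taking the infimum over compact \(K\), together with the trivial bound \(r\le 1\), yields \eqref{eq: approx-theorem-bound}.

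\emph{Step 4 (\(u(\epsilon)\to0\)).} Fix \(\eta>0\) and a compact \(K\) with \(\mathbb P(M\notin K)<\eta\). Since \(g\) is positive and continuous, and \(K\cdot O(q)\) is compact, there is \(c_K>0\) with \(g(m\scrO^\top)/g(m)\geq c_K\) for all \(m\in K\) and all \(\scrO\in O(q)\). Then \(u(\epsilon)\leq\eta+(N(\epsilon)c_K)^{-1}\). Since \(N(\epsilon)\to\infty\) as \(\epsilon\downarrow0\), we get \(\limsup_{\epsilon\downarrow0}u(\epsilon)\leq\eta\), and \(\eta\) is arbitrary.
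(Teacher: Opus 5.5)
Your architecture matches the paper's. You pull each model back to common score coordinates $m'=m\scrO^\top$ (the paper's transformation $(m,e_\infty)\mapsto(m\scrO^\top,e_\infty)$), use that the estimator sees identical data while the targets $\Xi_{p,I}(m')D_{p,I}\scrO D_{p,\scrO}$ differ, change measure with $c_{\scrO}(K)$, and sum over disjoint sets. Steps 1, 3 and 4 are sound. In Step 3 you do not need $\mathbb G_\epsilon$ to be closed under inversion: substituting $u=m\scrO^\top$ directly gives $\mathbb P\bigl(A_{f,\epsilon}(\scrO)\cap\{M\in K\}\bigr)\le c_{\scrO}(K)^{-1}\mu\bigl(T_{\scrO}\cap\{u\scrO\in K\}\bigr)\le c_{\scrO}(K)^{-1}\mu(T_{\scrO})$, with exactly the ratio $g(m\scrO^\top)/g(m)$. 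Also drop the plan to make $D_{p,\scrO}$ eventually constant. Nothing ties $\scrB_p$ across $p$, so this can fail, and a bound uniform over the finitely many sign matrices is all you need.

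The genuine gap is the separation estimate in Step 2, which is the geometric crux.

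\emph{The relaxation is false.} Your display has the common left factor $D=D_{p,I}$, but you then relax to $\min_{D_1,D_2}\|\scrO-D_1\scrO'D_2\|_2$, with a sign matrix to the left of $\scrO'$. This admits reflections, and the claimed bound $\ge 2\sin(\theta/2)$ fails. With $J=\operatorname{diag}(1,-1)$ one has $JR(\gamma)J=R(-\gamma)$. So for $\scrO=R(a\theta)\oplus I_{q-2}$, $\scrO'=R(b\theta)\oplus I_{q-2}$, $D_1=(-J)\oplus I_{q-2}$ and $D_2=J\oplus I_{q-2}$, the norm equals $2|\cos((a+b)\theta/2)|$. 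This is below $2\epsilon$ whenever $(a+b)\theta$ is within $2\arcsin\epsilon$ of $\pi$. Concretely, for $\epsilon$ slightly below $1/2$ one gets $N(\epsilon)=3$, your scheme allows $\theta=\pi/3$, and $R(\pi/3)\oplus I_{q-2}=D_1\bigl(R(2\pi/3)\oplus I_{q-2}\bigr)D_2$ exactly.

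\emph{The angle restriction is insufficient.} Even without $D_1$, the restriction $k\theta<\pi$ does not give the cosine half of the bound. If $(N(\epsilon)-1)\theta$ is close to $\pi$, the sign flip $(-I_2)\oplus I_{q-2}$ makes $I_q$ and $R((N(\epsilon)-1)\theta)\oplus I_{q-2}$ nearly coincide.

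\emph{The repair is the paper's computation.} Since $D_{p,I}$ cancels, only $\|I_q-\scrO^\top\scrO'D\|_2$ with a single diagonal sign matrix $D$ matters. On the leading $2\times 2$ block, determinant $-1$ gives distance $2$. Determinant $+1$ gives $\pm R(\pm j\theta)$ with $j=|a-b|$, at distance $2\min\{|\sin(j\theta/2)|,|\cos(j\theta/2)|\}$. This exceeds $2\epsilon$ for all $1\le j\le N(\epsilon)-1$ when $\theta\in\bigl(2\arcsin\epsilon,(\pi-2\arcsin\epsilon)/(N(\epsilon)-1)\bigr)$. That interval is nonempty because $N(\epsilon)\cdot 2\arcsin\epsilon<\pi$, and this is Lemma~\ref{lem: v2-direct-angle-packing}. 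Your choice of $\theta$ slightly larger than $2\arcsin\epsilon$ is fine once quantified this way.
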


\begin{remark}[Knowledge of the model family]\label{rem:oracle-knowledge-v4}
Each \(f_p\) may be designed with full knowledge of every component of the
candidate model family except the realized index \(\scrO\).  Nevertheless,
the same estimator must be used for every \(\scrO\in\mathbb G_\epsilon\),
and the theorem shows that the probability of its path-recovery event cannot
be uniformly high over that family.
\end{remark}

\begin{corollary}[No uniformly reliable exact path recovery]\label{thm:anglesgeneralized}
Under the assumptions of Theorem~\ref{thm: approximate-recovery-bound}, for
every \(\epsilon\in(0,1/\sqrt2)\), there exists a finite set
\(\mathbb G_\epsilon\subset O(q)\) with
\(|\mathbb G_\epsilon|=N(\epsilon)\), where \(N(\epsilon)\) is given in
\eqref{eq:packing-size-v4}, such that, for every sequence of measurable
estimators \(f=(f_p)_{p\geq q}\),
\[
  \inf_{\scrO\in\mathbb G_\epsilon}
  \mathbb P\bigl(A_f(\scrO)\bigr)
  \leq
  u(\epsilon),
\]
where \(u(\epsilon)\) is defined in \eqref{eq: u-epsilon} for the set
\(\mathbb G_\epsilon\) supplied by that theorem.  Moreover, there exists a
countable set \(\mathbb G_0\subset O(q)\) such that, for every sequence of
measurable estimators \(f=(f_p)_{p\geq q}\),
\begin{equation}
  \inf_{\scrO\in\mathbb G_0}
  \mathbb P\bigl(A_f(\scrO)\bigr)=0.
  \label{eq:exact-recovery-zero-v4}
\end{equation}
\end{corollary}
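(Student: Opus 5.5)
The plan has two parts. The first part of Corollary~\ref{thm:anglesgeneralized} follows at once from Theorem~\ref{thm: approximate-recovery-bound}. The second part needs a countable family built from those finite orbits.

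\textbf{First part.} Since \(\psi_{\min}>0\) almost surely, \(A_f(\scrO)=A_{f,0}(\scrO)\subseteq A_{f,\epsilon}(\scrO)\) for every \(\epsilon\ge0\). Indeed, \(\limsup_p\|\Xi_{p,\scrO}-f_p(\Y_{p,\scrO})\|_2\le 0\) implies the same quantity is at most \(\epsilon\psi_{\min}\). Monotonicity of \(\mathbb P\) then gives \(\mathbb P(A_f(\scrO))\le\mathbb P(A_{f,\epsilon}(\scrO))\) for every \(\scrO\). Taking the infimum over the set \(\mathbb G_\epsilon\) supplied by Theorem~\ref{thm: approximate-recovery-bound} yields the first display, with the same \(N(\epsilon)\) and the same \(u(\epsilon)\).

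\textbf{Countable family.} Choose a sequence \(\epsilon_k\downarrow0\) in \((0,1/\sqrt2)\) and set \(\mathbb G_0:=\bigcup_{k}\mathbb G_{\epsilon_k}\). This is a countable union of finite sets, hence countable. Fix any estimator sequence \(f\). For each \(k\), \(\mathbb G_{\epsilon_k}\subseteq\mathbb G_0\), so
\[
  \inf_{\scrO\in\mathbb G_0}\mathbb P\bigl(A_f(\scrO)\bigr)
  \le\inf_{\scrO\in\mathbb G_{\epsilon_k}}\mathbb P\bigl(A_f(\scrO)\bigr)
  \le u(\epsilon_k).
\]
Letting \(k\to\infty\) and using \(\lim_{\epsilon\downarrow0}u(\epsilon)=0\) from Theorem~\ref{thm: approximate-recovery-bound} gives \eqref{eq:exact-recovery-zero-v4}. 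The infimum is nonnegative, so it equals zero.

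\textbf{Main point to check.} The only delicate issue is that \(\mathbb G_0\) must not depend on \(f\). This holds because Theorem~\ref{thm: approximate-recovery-bound} produces \(\mathbb G_\epsilon\) before quantifying over estimators, and \(u(\epsilon)\) depends only on \(\mathbb G_\epsilon\), \(g\) and the law of \(M\). The limit statement \(u(\epsilon)\to0\) refers to these same sets \(\mathbb G_\epsilon\).

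A second point is that \(u(\epsilon_k)\) is evaluated on \(\mathbb G_{\epsilon_k}\) itself, not on the union. Enlarging the orbit only decreases the infimum, so the inequality chain above is valid as written.
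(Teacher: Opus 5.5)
Your proposal is correct and follows essentially the same route as the paper: the pointwise inclusion \(A_f(\scrO)\subset A_{f,\epsilon}(\scrO)\) gives the first claim. The set \(\mathbb G_0\) is then the union of the estimator-independent finite orbits along a sequence \(\epsilon_k\downarrow0\) (the paper takes \(\epsilon_k=1/k\), \(k\geq2\)), so \(u(\epsilon_k)\to0\) forces the infimum to vanish. For the inclusion you only need \(\epsilon\psi_{\min}\geq0\), which holds everywhere, so almost-sure positivity of \(\psi_{\min}\) is not required.
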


\begin{proof}
Exact recovery is the special case \(\epsilon=0\), so
\[
A_f(\scrO)\subset A_{f,\epsilon}(\scrO)
\qquad\text{for every }\scrO.
\]
The first claim follows by taking the set \(\mathbb G_\epsilon\) supplied by
Theorem~\ref{thm: approximate-recovery-bound}.  For each integer \(k\geq2\),
choose one such set \(\mathbb G_{1/k}\), independently of the estimator, and
define the countable set
\[
  \mathbb G_0:=\bigcup_{k\geq2}\mathbb G_{1/k}.
\]
For every estimator sequence \(f\) and every \(k\geq2\), the inclusion
\(\mathbb G_{1/k}\subset\mathbb G_0\) gives
\[
\inf_{\scrO\in\mathbb G_0}
\mathbb P\bigl(A_f(\scrO)\bigr)
\leq
\inf_{\scrO\in\mathbb G_{1/k}}
\mathbb P\bigl(A_f(\scrO)\bigr)
\leq u(1/k).
\]
Since \(\lim\limits_{k\uparrow\infty}u(1/k)=0\), this proves
\eqref{eq:exact-recovery-zero-v4}.
\end{proof}

\begin{corollary}[No uniformly reliable path recovery of population eigenvectors]
\label{cor:population-eigenspace-bound}
Let \((h_p)_{p\geq q}\) be measurable estimators of the selected population
eigenvector matrix, with
\(h_p:\mathbb R^{p\times n}\to\mathbb R^{p\times q}\) for
\(p\geq q\).  Define
\[
A^{\mathcal B}_{h,\epsilon}(\scrO)
:=
\left\{
\limsup\limits_{p\uparrow\infty}
\left\|
\mathcal B_{p,\scrO}-h_p(\Y_{p,\scrO})
\right\|_2
\leq
\epsilon\psi_{\min}
\right\}.
\]
Then, for the set \(\mathbb G_\epsilon\) in
Theorem~\ref{thm: approximate-recovery-bound},
\[
  \inf_{\scrO\in\mathbb G_\epsilon}
  \mathbb P\bigl(A^{\mathcal B}_{h,\epsilon}(\scrO)\bigr)
  \leq u(\epsilon).
\]
With \(\mathbb G_0\) the countable set supplied by
Corollary~\ref{thm:anglesgeneralized}, let
\(A^{\mathcal B}_h(\scrO):=A^{\mathcal B}_{h,0}(\scrO)\).
Then
\[
  \inf_{\scrO\in\mathbb G_0}
  \mathbb P\bigl(A^{\mathcal B}_h(\scrO)\bigr)=0.
\]
\end{corollary}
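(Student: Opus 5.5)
The plan is to reduce Corollary~\ref{cor:population-eigenspace-bound} to Theorem~\ref{thm: approximate-recovery-bound} and Corollary~\ref{thm:anglesgeneralized}. Given any measurable population-eigenvector estimator sequence \((h_p)\), we build an alignment estimator sequence \((f_p)\) whose success event contains that of \((h_p)\). Set
\[
  f_p(y):=\nu_{p\times q}\bigl(\tfrac1n yy^\top\bigr)^{\top}h_p(y),
  \qquad y\in\mathbb R^{p\times n}.
\]
This \(f_p\) is measurable: \(y\mapsto yy^\top/n\) is continuous, \(\nu_{p\times q}\) is Borel by construction, and products of measurable matrix-valued maps are measurable. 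At \(y=\Y_{p,\scrO}\) the first factor is exactly \(\mathcal H_{p,\scrO}^\top\), because \(\mathcal H_{p,\scrO}=\nu_{p\times q}(S_{p,\scrO})\) with \(S_{p,\scrO}=\Y_{p,\scrO}\Y_{p,\scrO}^\top/n\). The estimator therefore uses only the data and the fixed selector, with no knowledge of \(\scrO\), as Remark~\ref{rem:oracle-knowledge-v4} requires.

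The key step is a pathwise contraction bound. For every \(p\) and \(\scrO\),
\[
  \|\Xi_{p,\scrO}-f_p(\Y_{p,\scrO})\|_2
  =\|\mathcal H_{p,\scrO}^\top(\mathcal B_{p,\scrO}-h_p(\Y_{p,\scrO}))\|_2
  \le\|\mathcal H_{p,\scrO}\|_2\,\|\mathcal B_{p,\scrO}-h_p(\Y_{p,\scrO})\|_2 .
\]
Since \(\mathcal H_{p,\scrO}\) has orthonormal columns, \(\|\mathcal H_{p,\scrO}^\top\|_2=1\) holds for every \(p\), deterministically. It is worth confirming from the appendix convention that the selector always returns orthonormal columns, even at multiplicities. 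If it did not, the bound would still hold eventually almost surely on the simple-spectrum event, which is enough for a \(\limsup\) statement. Taking \(\limsup_{p\uparrow\infty}\) of both sides gives the event inclusion
\[
  A^{\mathcal B}_{h,\epsilon}(\scrO)\subset A_{f,\epsilon}(\scrO)
\]
for every \(\scrO\) and every \(\epsilon\ge0\). The threshold \(\epsilon\psi_{\min}\) is the same random variable on both sides, so no rescaling is needed.

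The two claims then follow by monotonicity of probability. For the first, apply Theorem~\ref{thm: approximate-recovery-bound} to the sequence \(f\); the set \(\mathbb G_\epsilon\) does not depend on the estimator, so
\[
  \inf_{\mathbb G_\epsilon}\mathbb P\bigl(A^{\mathcal B}_{h,\epsilon}(\scrO)\bigr)\le\inf_{\mathbb G_\epsilon}\mathbb P\bigl(A_{f,\epsilon}(\scrO)\bigr)\le u(\epsilon).
\]
For the second, take \(\epsilon=0\), which gives \(A^{\mathcal B}_h(\scrO)\subset A_f(\scrO)\), and invoke \eqref{eq:exact-recovery-zero-v4} for the estimator-independent set \(\mathbb G_0\). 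I expect no real obstacle here. The only point needing care is the measurability and estimator-independence bookkeeping: \(f\) must be a legitimate nonrandom measurable sequence, and the orbit sets must be fixed before \(h\) is chosen. Both are guaranteed by the construction above and by the way the sets \(\mathbb G_\epsilon\) and \(\mathbb G_0\) are built in the theorem and the corollary.
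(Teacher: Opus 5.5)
Your proposal is correct and follows the paper's proof: the same induced estimator \(f_p(y)=\nu_{p\times q}(yy^\top/n)^\top h_p(y)\), the same contraction bound from \(\|\mathcal H_{p,\scrO}\|_2=1\), and the same event inclusion \(A^{\mathcal B}_{h,\epsilon}(\scrO)\subset A_{f,\epsilon}(\scrO)\) fed into Theorem~\ref{thm: approximate-recovery-bound} and Corollary~\ref{thm:anglesgeneralized}. Your fallback is not needed, since the appendix selector always returns orthonormal columns, including at multiplicities.
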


\begin{proof}
If \(h_p\) estimates \(\mathcal B_{p,\scrO}\), define the estimator
\[
  f_p(y)
  :=
  \nu_{p\times q}(yy^\top/n)^\top h_p(y).
\]
For every fixed \(\scrO\), this estimator targets
\(\Xi_{p,\scrO}=\mathcal H_{p,\scrO}^\top\mathcal B_{p,\scrO}\).  Since
\(\|\mathcal H_{p,\scrO}\|_2=1\),
\[
\left\|
\Xi_{p,\scrO}-f_p(\Y_{p,\scrO})
\right\|_2
\leq
\left\|
\mathcal B_{p,\scrO}-h_p(\Y_{p,\scrO})
\right\|_2.
\]
Thus \(A^{\mathcal B}_{h,\epsilon}(\scrO)\subset A_{f,\epsilon}(\scrO)\) for every
\(\scrO\).  Taking the infimum over each orbit and applying
Theorem~\ref{thm: approximate-recovery-bound} and
Corollary~\ref{thm:anglesgeneralized} proves both claims.
\end{proof}

\begin{remark}[Beyond inconsistency of sample PCA]
\label{rem:population-recovery-significance-v4}
Corollary~\ref{cor:population-eigenspace-bound} concerns every measurable
estimator of the selected population eigenvector matrix
\(\mathcal B_{p,\scrO}\), not merely sample PCA.  For each estimator
sequence, the probability of asymptotically exact recovery has infimum zero
over \(\mathbb G_0\), ruling out almost-sure consistency throughout that
family.  This conclusion persists when every component of the model family
except \(\scrO\) is known.  Thus replacing PCA with another
measurable procedure cannot guarantee almost-sure recovery throughout the
stated fixed-sample family.
\end{remark}

\subsection{Why the proof uses fixed sample size and multiple spikes}

The proof uses two structural facts.  First, \(n\) is fixed.  Therefore
\(M=X\scrW\Lambda\) lives in the fixed finite-dimensional space
\(\mathbb R^{n\times q}\), and the likelihood ratios
\[
  \frac{g(m\scrO^\top)}{g(m)}
\]
compare densities on one common space.  If \(n=n_p\) with
\(\lim\limits_{p\uparrow\infty}n_p=\infty\), the same
compact truncation and orbit-summation argument would no longer give a
dimension-independent comparison.
The theorem therefore makes no claim about other possible obstructions when
the sample size grows.

Second, the requirement \(q\geq2\) distinguishes the present negative result
from an established positive result for a single spike.  Under their
single-factor assumptions,
\citet[Theorem~3.1]{goldberg2022dispersion} establish that an observable
spectral contrast estimates \(\langle h,b\rangle^2\) almost surely
consistently, where \(h\) is the leading sample eigenvector and \(b\) its
population counterpart.  Their sign convention then allows its positive
square root to estimate \(\langle h,b\rangle\) almost surely consistently as
the dimension increases with the sample size fixed.  Thus the scalar alignment
can be recovered in that setting after resolving the sign.  Our theorem
establishes an obstruction to uniformly reliable almost-sure path recovery of
the full alignment matrix in the multi-spike setting \(q\geq2\).

\subsection{Proof idea}

Under model \(\scrO\), the score realization \(m\scrO\) produces
\[
  \scrB_p\scrO(m\scrO)^\top+e_p=\scrB_pm^\top+e_p.
\]
Thus different orientations can give the same observations, and hence the
same sample eigenvectors and estimator output.  The orientations in
\(\mathbb G_\epsilon\) are chosen so that their alignment targets are
sufficiently separated to make simultaneous recovery impossible.

This yields pairwise disjoint Borel sets \(F_{K,\scrO}\) representing
successful recovery after the score rotations.  A density comparison bounds
their probabilities from below:
\[
  \mu(F_{K,\scrO})
  \geq c_{\scrO}(K)\left[
  \inf_{\scrO'\in\mathbb G_\epsilon}
  \mathbb P\bigl(A_{f,\epsilon}(\scrO')\bigr)
  -\mathbb P(M\notin K)\right],
\]
where \(K\) is a nonempty compact score set, \(\mu\) is the law of
\((M,\E_\infty)\), and
\[
  c_{\scrO}(K):=\inf_{m\in K}\frac{g(m\scrO^\top)}{g(m)}>0.
\]
The argument now reduces to counting probability mass: disjointness forces
the sum of these lower bounds to be at most one, giving the claimed
restriction on recovery.

\subsection{Conclusion}

We have shown that, in the HDLSS spiked covariance regime with \(q\geq2\),
no estimator of the full sample-population eigenvector alignment matrix
achieves uniformly reliable almost-sure approximate path recovery in the
fixed-sample experiment: under at least one orbit point in a finite
orthogonal packing, the recovery event has probability at most
\(u(\epsilon)\).  The obstruction is not a failure of all spectral
information.  At every fixed orbit point, the same data-only spectral contrast
is almost surely consistent as \(p\uparrow\infty\) for the alignment Gram
matrix \(\Xi_{p,\scrO}\Xi_{p,\scrO}^{\top}\).  By contrast, no single
measurable estimator achieves uniformly reliable path recovery of the full
matrix of pairwise alignments between the selected sample and population
eigenvectors over the packed orbit.  Thus the paper delineates a boundary in
this HDLSS experiment: the Gram summary of eigenvector
alignment is recoverable model by model along the path, but the finer ordered
alignment matrix does not admit uniformly reliable almost-sure path recovery
without additional structure.

\section*{Acknowledgements}
We thank Alec Kercheval for fruitful discussions.  We acknowledge financial
support from the UCLA Olga Radko Endowed Math Circle (ORMC).

\section*{Disclosure of AI assistance}
During preparation of this manuscript, the authors used ChatGPT (OpenAI) for
language editing, structural organization, and consistency checks.  All
mathematical ideas, statements, and proofs were developed and verified by the
authors, who take full responsibility for the manuscript.

\appendix

\section{Proofs and auxiliary lemmas}
\label{sec:appendix}
\subsection{Proof of Theorem~\ref{thm: quadopt}}
\label{app:spectral-contrast-proof-v4}

\begin{proof}
Fix \(\scrO\in O(q)\) and set
\[
  Q_{p,\scrO}:=p^{-1/2}\scrB_p\scrO,
  \qquad
  P_{p,\scrO}:=Q_{p,\scrO}Q_{p,\scrO}^{\top}.
\]
Then \(Q_{p,\scrO}^{\top}Q_{p,\scrO}=I_q\), and the leading population
eigenspace is the column space of \(Q_{p,\scrO}\).  Consequently,
\[
  Q_{p,\scrO}Q_{p,\scrO}^{\top}
  =P_{p,\scrO},
  \qquad
  \Xi_{p,\scrO}\Xi_{p,\scrO}^{\top}
  =\mathcal H_{p,\scrO}^{\top}
   P_{p,\scrO}\mathcal H_{p,\scrO}.
\]
Also,
\[
  \Y_{p,\scrO}
  =\sqrt p\,Q_{p,\scrO}M^\top+\E_p.
\]

We first record the dual Gram limit.  Expanding
\[
  K_{p,\scrO}:=p^{-1}\Y_{p,\scrO}^{\top}\Y_{p,\scrO}
\]
and using \(Q_{p,\scrO}^{\top}Q_{p,\scrO}=I_q\) gives
\begin{align}
  K_{p,\scrO}
  ={}&
  MM^\top
  +
  M\frac{Q_{p,\scrO}^{\top}\E_p}{\sqrt p}
  +\frac{\E_p^\top Q_{p,\scrO}}{\sqrt p}M^\top
  +\frac{\E_p^\top\E_p}{p}.
  \label{eq:dual-gram-expansion-v4}
\end{align}
Assumption~\ref{asm:asymp}(c) implies
\begin{equation}
  \lim\limits_{p\uparrow\infty}
  \frac{Q_{p,\scrO}^{\top}\E_p}{\sqrt p}
  =
  \scrO^\top\lim\limits_{p\uparrow\infty}
  \frac{\scrB_p^\top\E_p}{p}
  =0
  \qquad\text{almost surely}.
  \label{eq:projected-noise-limit-v4}
\end{equation}
Therefore,
\begin{equation}
  \lim\limits_{p\uparrow\infty}K_{p,\scrO}
  =MM^\top+\delta^2I_n
  \qquad\text{almost surely}.
  \label{eq:dual-gram-limit-v4}
\end{equation}
The positive continuous density of \(M\) implies
\(\operatorname{rank}(M)=q\) almost surely.  Hence the limit in
\eqref{eq:dual-gram-limit-v4} is positive definite, so
\(\operatorname{rank}(\Y_{p,\scrO})=n\) for all sufficiently large \(p\)
almost surely.

On this eventual event, let
\[
  d_{i,p,\scrO}:=\frac{n s_{i,p,\scrO}^{\,2}}{p},
  \qquad
  D_{p,\scrO}:=
  \diag(d_{1,p,\scrO},\ldots,d_{q,p,\scrO}),
\]
where \(s_{i,p,\scrO}^{\,2}\) is the \(i\)-th largest nonzero eigenvalue of
\(S_{p,\scrO}\).  Choose the \(n\times q\) matrix \(V_{p,\scrO}\) of
corresponding right singular vectors so that
\begin{equation}
  \mathcal H_{p,\scrO}
  =
  \frac1{\sqrt p}\Y_{p,\scrO}
  V_{p,\scrO}D_{p,\scrO}^{-1/2}.
  \label{eq:left-right-singular-v4}
\end{equation}
The columns of \(V_{p,\scrO}\) are orthonormal and
\[
  V_{p,\scrO}^{\top}K_{p,\scrO}V_{p,\scrO}
  =D_{p,\scrO}.
\]

The key step is an exact finite-\(p\) decomposition.  Since
\((I_p-P_{p,\scrO})Q_{p,\scrO}=0\),
\[
  (I_p-P_{p,\scrO})\Y_{p,\scrO}
  =(I_p-P_{p,\scrO})\E_p,
\]
and therefore
\begin{equation}
  \frac1p\Y_{p,\scrO}^{\top}P_{p,\scrO}\Y_{p,\scrO}
  =
  K_{p,\scrO}
  -\frac1p\E_p^\top\E_p
  +\frac1p\E_p^\top P_{p,\scrO}\E_p.
  \label{eq:projected-dual-identity-v4}
\end{equation}
Combining \eqref{eq:left-right-singular-v4} and
\eqref{eq:projected-dual-identity-v4} yields
\begin{align}
  \Xi_{p,\scrO}\Xi_{p,\scrO}^{\top}
  ={}&
  I_q-
  D_{p,\scrO}^{-1/2}V_{p,\scrO}^{\top}
  \left(
    \frac1p\E_p^\top\E_p
    -\frac1p\E_p^\top P_{p,\scrO}\E_p
  \right)
  V_{p,\scrO}D_{p,\scrO}^{-1/2}.
  \label{eq:alignment-gram-exact-v4}
\end{align}

Define the observable bulk average on the same eventual event by
\[
  \widehat\delta_{p,\scrO}^{\,2}
  :=
  \frac{n}{p(n-q)}
  \sum_{j=q+1}^{n}s_{j,p,\scrO}^{\,2}.
\]
Since \(n_+=n\) eventually, the spectral-contrast definition in
\eqref{eq:def-spectral-contrast-v4}
and the definition of \(D_{p,\scrO}\) give
\begin{equation}
  \Psi_{p,\scrO}
  =
  I_q-\widehat\delta_{p,\scrO}^{\,2}D_{p,\scrO}^{-1}.
  \label{eq:contrast-dual-form-v4}
\end{equation}
Subtracting \eqref{eq:contrast-dual-form-v4} from
\eqref{eq:alignment-gram-exact-v4}, and adding and subtracting
\(\delta^2I_n\), gives the exact decomposition
\begingroup
\interdisplaylinepenalty=10000
\begin{align}
  \Xi_{p,\scrO}\Xi_{p,\scrO}^{\top}-\Psi_{p,\scrO}
  &=
  D_{p,\scrO}^{-1/2}V_{p,\scrO}^{\top}
  \mathcal R_{p,\scrO}
  V_{p,\scrO}D_{p,\scrO}^{-1/2},
  \label{eq:contrast-error-decomposition-v4}\\
  \mathcal R_{p,\scrO}
  &:=
    -\left(\frac1p\E_p^\top\E_p-\delta^2I_n\right)
  \nonumber\\
  &\quad
    +\frac1p\E_p^\top P_{p,\scrO}\E_p
  \nonumber\\
  &\quad
    +\left(\widehat\delta_{p,\scrO}^{\,2}-\delta^2\right)I_n
  .
\end{align}
\endgroup

It remains to show that every term on the right vanishes while the two
factors \(D_{p,\scrO}^{-1/2}\) remain bounded.  First,
\[
  \lim\limits_{p\uparrow\infty}
  \frac1p\E_p^\top P_{p,\scrO}\E_p
  =
  \lim\limits_{p\uparrow\infty}
  \left(
    \frac{Q_{p,\scrO}^{\top}\E_p}{\sqrt p}
  \right)^\top
  \left(
    \frac{Q_{p,\scrO}^{\top}\E_p}{\sqrt p}
  \right)
  \overset{\eqref{eq:projected-noise-limit-v4}}{=}0
  \qquad\text{almost surely}.
\]
Next, Weyl's inequality applied to \eqref{eq:dual-gram-limit-v4} shows that
the bottom \(n-q\) eigenvalues of \(K_{p,\scrO}\) converge to \(\delta^2\).
Thus
\[
  \lim\limits_{p\uparrow\infty}
  \widehat\delta_{p,\scrO}^{\,2}=\delta^2
  \qquad\text{almost surely}.
\]
The same argument shows
\[
  \lim\limits_{p\uparrow\infty}d_{q,p,\scrO}
  =
  \lambda_{\min}(M^\top M)+\delta^2
  >0
  \qquad\text{almost surely},
\]
so \(\|D_{p,\scrO}^{-1/2}\|_2\) is eventually bounded.  Together with
Assumption~\ref{asm:asymp}(c), the operator norm of the right-hand side of
\eqref{eq:contrast-error-decomposition-v4} therefore converges to zero.
This proves \eqref{eq:gram-estimable-v4}.

Finally, for \(i\leq q\),
\[
  \lim\limits_{p\uparrow\infty}(\Psi_{p,\scrO})_{ii}
  =
  \lim\limits_{p\uparrow\infty}
  \left(1-\frac{\widehat\delta_{p,\scrO}^{\,2}}{d_{i,p,\scrO}}\right)
  =
  \frac{\lambda_i(M^\top M)}
       {\lambda_i(M^\top M)+\delta^2}
  \in(0,1)
\]
almost surely, which proves the last assertion.
\end{proof}

\subsection{Proof of the Direct Angle Packing Lemma}

\begin{lemma}[Direct angle packing]\label{lem: v2-direct-angle-packing}
Let \(0<\epsilon<1/\sqrt2\), set
\[
  \alpha_\epsilon:=2\arcsin(\epsilon),
  \qquad
  N(\epsilon):=
  \left\lceil\frac{\pi}{2\arcsin(\epsilon)}\right\rceil-1 .
\]
There exist angles
\[
  \Gamma_\epsilon=
  \{\gamma_0,\gamma_1,\ldots,\gamma_{N(\epsilon)-1}\}
  \subset[0,\pi)
\]
such that, for every \(a\neq b\),
\[
\left|\sin\left(\frac{\gamma_a-\gamma_b}{2}\right)\right|>\epsilon,
\qquad
\left|\cos\left(\frac{\gamma_a-\gamma_b}{2}\right)\right|>\epsilon .
\]
\end{lemma}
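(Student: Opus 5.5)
The plan is to take equally spaced angles $\gamma_a:=a\pi/N(\epsilon)$ for $a=0,1,\ldots,N(\epsilon)-1$, and to reduce both trigonometric inequalities to a single condition on the pairwise differences.

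\textbf{Step 1: $N(\epsilon)$ is admissible.} Since $0<\epsilon<1/\sqrt2$, we have $0<\alpha_\epsilon=2\arcsin(\epsilon)<\pi/2$, so $\pi/\alpha_\epsilon>2$. Hence $N(\epsilon)\geq 2$, and the construction gives at least two distinct angles in $[0,\pi)$. The elementary inequality $\lceil x\rceil-1<x$, applied with $x=\pi/\alpha_\epsilon$, gives $N(\epsilon)<\pi/\alpha_\epsilon$. Equivalently, the spacing satisfies
\[
  h:=\frac{\pi}{N(\epsilon)}>\alpha_\epsilon .
\]

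\textbf{Step 2: reduce the inequalities to a window for the differences.} Fix $a\neq b$ and set $d:=\gamma_a-\gamma_b$. Then $d=(a-b)h$, so $h\leq|d|\leq (N(\epsilon)-1)h=\pi-h$. In particular $|d|/2\in(0,\pi/2)$. On this interval $t\mapsto\sin t$ is increasing and $t\mapsto\cos t$ is decreasing, and both are positive. Therefore:
\begin{itemize}
\item $|\sin(d/2)|>\epsilon$ is equivalent to $|d|/2>\arcsin\epsilon$, that is, $|d|>\alpha_\epsilon$;
\item $|\cos(d/2)|>\epsilon$ is equivalent to $|d|/2<\arccos\epsilon=\pi/2-\arcsin\epsilon$, that is, $|d|<\pi-\alpha_\epsilon$.
\end{itemize}

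\textbf{Step 3: conclude.} By Step 1, $|d|\geq h>\alpha_\epsilon$ and $|d|\leq\pi-h<\pi-\alpha_\epsilon$. Both strict inequalities from Step 2 therefore hold, and the lemma follows.

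The only point needing care is the window $(\alpha_\epsilon,\pi-\alpha_\epsilon)$ in Step 2. The $\cos$ condition caps the total spread at $\pi-\alpha_\epsilon$, while the $\sin$ condition forces the spacing above $\alpha_\epsilon$. Fitting $N$ points therefore needs $(N-1)h<\pi-\alpha_\epsilon$ with $h>\alpha_\epsilon$. Any $h>\alpha_\epsilon$ satisfying $Nh\leq\pi$ meets both requirements, and such an $h$ exists precisely because $N(\epsilon)\alpha_\epsilon<\pi$, which is the definition of $N(\epsilon)$. Everything else is monotonicity of $\sin$ and $\cos$ on $(0,\pi/2)$.
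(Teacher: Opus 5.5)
Your proof is correct and follows essentially the same route as the paper. The paper takes equally spaced angles $\gamma_i=i\phi$ with any spacing $\phi\in\bigl(\alpha_\epsilon,(\pi-\alpha_\epsilon)/(N(\epsilon)-1)\bigr)$, an interval that is nonempty because $N(\epsilon)\alpha_\epsilon<\pi$; your spacing $h=\pi/N(\epsilon)$ lies in that interval, both arguments then finish by monotonicity of $\sin$ and $\cos$ on $(0,\pi/2)$, and your explicit check that $N(\epsilon)\geq 2$ fills in a detail the paper leaves implicit.
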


\begin{proof}[Proof of Lemma~\ref{lem: v2-direct-angle-packing}]
Let $k:=N(\epsilon)$ and set
\[
\phi\in
\left(
\alpha_\epsilon,\,
\frac{\pi-\alpha_\epsilon}{k-1}
\right).
\]
Such a choice is possible because the definition of $N(\epsilon)$ gives
$k\alpha_\epsilon<\pi$.
Define
\[
\gamma_i:=i\phi,\qquad i=0,1,\ldots,k-1,
\qquad
\Gamma_\epsilon:=\{\gamma_0,\gamma_1,\ldots,\gamma_{k-1}\}.
\]
Then $\Gamma_\epsilon\subset[0,\pi)$. If $a\neq b$, then
\[
\frac{|\gamma_a-\gamma_b|}{2}
=
\frac{|a-b|\phi}{2}
\in\left(\frac{\alpha_\epsilon}{2},
\frac{\pi}{2}-\frac{\alpha_\epsilon}{2}\right).
\]
Therefore
\[
\left|\sin\left(\frac{\gamma_a-\gamma_b}{2}\right)\right|
>
\sin\left(\frac{\alpha_\epsilon}{2}\right)
=
\epsilon,
\quad
\left|\cos\left(\frac{\gamma_a-\gamma_b}{2}\right)\right|
>
\cos\left(\frac{\pi}{2}-\frac{\alpha_\epsilon}{2}\right)
=
\epsilon.
\]
This proves the claim.
\end{proof}

\subsection{Measurable eigenvector selection}

The proof treats estimators as measurable functions of the data, so we fix a
single measurable convention for eigenvectors and singular vectors.  For
$A\in\mathbb R^{m\times \ell}$, let
$\lambda_1(A)>\cdots>\lambda_d(A)$ be the distinct eigenvalue values of $AA^\top$,
including zero when present, where $d=d(A)$.  The associated spectral
projectors are given by the Lagrange interpolation formula
\citep[Section~7.3, p.~529, equation~(7.3.11)]{meyer2000matrix}:
\[
  P_j(A)=\prod_{\substack{1\leq k\leq d\\k\ne j}}
  \frac{AA^\top-\lambda_k(A)I_m}{\lambda_j(A)-\lambda_k(A)},
  \qquad j=1,\ldots,d,
\]
with an empty product interpreted as $I_m$.  The ordered eigenvalues counted
with multiplicity are continuous functions of $A$.  Matrix space therefore
has a finite Borel partition according to eigenvalue multiplicities.  On each
part, the distinct eigenvalue values are continuous and the denominators in
the displayed formula are nonzero, so each projector is continuous there.
Consequently, each $P_j$ is Borel measurable on its domain of definition. Let $\mathcal K_j=\operatorname{range}P_j(A)$, $j=1,\ldots,d$, and set
\[
  r_0=0,\qquad r_j=\sum_{a=1}^{j}\dim(\mathcal K_a).
\]
These mutually orthogonal subspaces exhaust $\mathbb R^m$, so $r_d=m$.
Having selected $v_1,\ldots,v_{k-1}$, select $v_k$ as follows.
\begin{enumerate}
\item Identify the unique $j\in\{1,\ldots,d\}$ satisfying
\[
  r_{j-1}<k\leq r_j.
\]
\item Within $\mathcal K_j$, take the orthogonal complement of the vectors
already selected from that eigenspace:
\[
  \mathcal K_{j,k}
  =\mathcal K_j\cap
  \operatorname{span}\{v_{r_{j-1}+1},\ldots,v_{k-1}\}^{\perp}.
\]
Its orthogonal projector is
\[
  R_{j,k}=P_j(A)-\sum_{a=r_{j-1}+1}^{k-1}v_av_a^\top,
\]
with an empty span interpreted as $\{0\}$ and an empty sum as zero.
Among the standard basis vectors $e_1,\ldots,e_m$, choose the first whose
projection onto $\mathcal K_{j,k}$ is nonzero:
\[
  s_k=\min\{s\in\{1,\ldots,m\}:R_{j,k}e_s\ne0\}.
\]
This index exists because $\dim(\mathcal K_{j,k})=r_j-k+1>0$.
\item Normalize that projection:
\[
  v_k=\frac{R_{j,k}e_{s_k}}{\|R_{j,k}e_{s_k}\|_2}.
\]
\end{enumerate}
Applying this procedure for $k=1,\ldots,m$ produces an orthonormal basis of
left singular vectors, ordered by nonincreasing singular value.  For
$q\leq m$, define
\[
  \nu_{m\times q}(A):=(v_1,\ldots,v_q).
\]
The spectral projectors and their ranks are Borel measurable.  Each
subsequent index is selected by finitely many Borel conditions, and
normalization occurs only at nonzero vectors.  Thus the construction defines
a Borel measurable selector, including at repeated eigenvalues and when
$\operatorname{rank}(A)<q$. For a symmetric positive semidefinite matrix, the same convention is applied to its leading eigenvectors. The precise convention at multiplicities is
irrelevant to the statistical conclusion; it is fixed only so that every
alignment matrix and every recovery event is measurable.

\subsection{Proof of Theorem~\ref{thm: approximate-recovery-bound}}
\label{app: approximate-recovery-proof-v4}

\begin{proof}
Let
\[
R(\gamma)
:=
\begin{pmatrix}
\cos\gamma&-\sin\gamma\\
\sin\gamma&\cos\gamma
\end{pmatrix}.
\]
By Lemma~\ref{lem: v2-direct-angle-packing}, there are angles
\[
\Gamma_\epsilon
=
\{\gamma_0,\ldots,\gamma_{N(\epsilon)-1}\}
\subset[0,\pi)
\]
such that, for every distinct \(\gamma_a,\gamma_b\in\Gamma_\epsilon\),
\[
\left|\sin\left(\frac{\gamma_a-\gamma_b}{2}\right)\right|>\epsilon,
\qquad
\left|\cos\left(\frac{\gamma_a-\gamma_b}{2}\right)\right|>\epsilon.
\]
For \(\gamma\in\Gamma_\epsilon\), define
\[
\scrO(\gamma):=R(\gamma)\oplus I_{q-2},
\qquad
\mathbb G_\epsilon
:=
\{\scrO(\gamma):\gamma\in\Gamma_\epsilon\}.
\]
Then \(|\mathbb G_\epsilon|=N(\epsilon)\).

\paragraph{Model-specific good sets.}
Let \(\mathcal Z:=\mathbb R^{n\times q}\times
\mathbb R^{\mathbb Z_+\times n}\) be equipped with its product Borel
sigma-field.  Let \(\mu\) denote the law on \(\mathcal Z\) of the
coordinate pair \((M,\E_\infty)\).  Let \(\lambda\) be Lebesgue measure
on \(\mathbb R^{n\times q}\), and let \(\mu_\infty\) be the law of
\(\E_\infty\).  Since \(M\) has density \(g\) and is independent of
\(\E_\infty\), we have
\[
\mu(d m,d e_\infty)
=
g(m)\,d\lambda(m)\,d\mu_\infty(e_\infty).
\]

For \((m,e_\infty)\in\mathcal Z\), let \(e_p\) denote the first \(p\)
rows of \(e_\infty\), and write
\[
  y_{p,\scrO}:=\scrB_p\scrO m^\top+e_p.
\]
Here \(\psi_{\min}(m)\) denotes the expression for \(\psi_{\min}\) in
\eqref{eq:psi-min-v4}, evaluated at \(M=m\).  For each
\(\scrO\in\mathbb G_\epsilon\), set
\[
G_{\scrO}:=
\left\{(m,e_\infty)\in\mathcal Z:
\begin{aligned}
&\limsup\limits_{p\uparrow\infty}
\left\|
\nu_{p\times q}\!\left(\frac{y_{p,\scrO}y_{p,\scrO}^{\top}}{n}\right)^\top
\mathcal B_{p,\scrO}-f_p(y_{p,\scrO})
\right\|_2
\\
&\hspace{2em}\leq\epsilon\psi_{\min}(m)
\end{aligned}
\right\}.
\]
The selector and the estimators are Borel measurable, while
\(y_{p,\scrO}\) and \(\psi_{\min}(m)\) depend continuously on the
coordinates.  Consequently, the limsup in the definition of \(G_{\scrO}\)
is Borel measurable, and hence \(G_{\scrO}\) is a Borel subset of
\(\mathcal Z\).
Since \(\mu\) is the law of \((M,\E_\infty)\),
\begin{equation}
\mu(G_{\scrO})
=
\mathbb P\bigl(A_{f,\epsilon}(\scrO)\bigr).
\label{app:eq:model-good-prob-v4}
\end{equation}

For each fixed \(\scrO\), Weyl's eigenvalue perturbation inequality gives
\[
\left|
\sigma_{\min}(\Xi_{p,\scrO})^2
-\lambda_{\min}(\Psi_{p,\scrO})
\right|
\leq
\|\Xi_{p,\scrO}\Xi_{p,\scrO}^{\top}-\Psi_{p,\scrO}\|_2.
\]
By Theorem~\ref{thm: quadopt}, the right-hand side has almost-sure limit
zero.  Equation~\eqref{eq:psi-min-v4} and continuity of the square root
therefore imply
\begin{equation}
\lim\limits_{p\uparrow\infty}\sigma_{\min}(\Xi_{p,\scrO})
=
\lim\limits_{p\uparrow\infty}
\lambda_{\min}(\Psi_{p,\scrO})^{1/2}
=
\psi_{\min}
>0
\qquad\text{almost surely}.
\label{eq:xi-smallest-limit-v4}
\end{equation}
For each \(\scrO\), choose a Borel set \(Z_{\scrO}\subset\mathcal Z\) of
\(\mu\)-probability one on which the spectral-contrast convergence and
\eqref{eq:xi-smallest-limit-v4} hold for that model.  Fix a nonempty compact
\(K\subset\mathbb R^{n\times q}\) and define
\[
H_{K,\scrO}
:=
G_{\scrO}\cap Z_{\scrO}
\cap
\bigl(K\times\mathbb R^{\mathbb Z_+\times n}\bigr).
\]
For every fixed \(\scrO\in\mathbb G_\epsilon\),
\begin{align}
\inf_{\scrO'\in\mathbb G_\epsilon}
\mathbb P\bigl(A_{f,\epsilon}(\scrO')\bigr)
&\leq\mathbb P\bigl(A_{f,\epsilon}(\scrO)\bigr)
=\mu(G_{\scrO})=\mu(G_{\scrO}\cap Z_{\scrO})
\nonumber\\*
&=\mu(H_{K,\scrO})
+\mu\!\left(G_{\scrO}\cap Z_{\scrO}
\cap\bigl(K^c\times\mathbb R^{\mathbb Z_+\times n}\bigr)\right)
\nonumber\\*
&\leq\mu(H_{K,\scrO})+\mathbb P(M\notin K).
\label{app:eq:model-truncation-v4}
\end{align}
Here we used \(\mu(Z_{\scrO})=1\), followed by the partition according to
whether the score coordinate belongs to \(K\).

\paragraph{Transformation to common-data coordinates.}
For \(\scrO\in\mathbb G_\epsilon\), define the Borel bijection
\[
T_{\scrO}(m,e_\infty)
:=
(m\scrO^\top,e_\infty)
\]
and the transformed good set
\[
F_{K,\scrO}:=T_{\scrO}(H_{K,\scrO}).
\]
Set
\[
c_{\scrO}(K)
:=
\inf_{m\in K}
\frac{g(m\scrO^\top)}{g(m)}.
\]
The continuity and strict positivity of \(g\), together with compactness of
\(K\), imply \(c_{\scrO}(K)>0\).

For each fixed \(e_\infty\), make the substitution \(u=m\scrO^\top\),
whose inverse is \(m=u\scrO\).  Right multiplication by \(\scrO^\top\)
acts orthogonally on each of the \(n\) rows, so its absolute Jacobian
determinant is \(|\det\scrO|^n=1\).  The noise coordinate remains unchanged,
and
\[
  (u,e_\infty)\in F_{K,\scrO}
  \quad\Longleftrightarrow\quad
  (m,e_\infty)\in H_{K,\scrO}.
\]
Consequently, by the change-of-variables formula,
\begin{align}
\mu(F_{K,\scrO})
&=\int_{F_{K,\scrO}}g(u)\,d\lambda(u)\,d\mu_\infty(e_\infty)
=\int_{H_{K,\scrO}}g(m\scrO^\top)\,d\lambda(m)\,d\mu_\infty(e_\infty)
\nonumber\\
&=\int_{H_{K,\scrO}}
\frac{g(m\scrO^\top)}{g(m)}\,d\mu(m,e_\infty)
\geq c_{\scrO}(K)\,\mu(H_{K,\scrO}).
\label{app:eq:model-change-v4}
\end{align}
The last inequality follows because \(m\in K\) throughout
\(H_{K,\scrO}\).

\paragraph{Pairwise exclusion.}
We claim that the sets
\(\{F_{K,\scrO}:\scrO\in\mathbb G_\epsilon\}\) are pairwise disjoint.
Suppose, to the contrary, that
\[
(m,e_\infty)\in F_{K,\scrO_1}\cap F_{K,\scrO_2}
\]
for distinct \(\scrO_1,\scrO_2\).  Since
\(T_{\scrO}^{-1}(m,e_\infty)=(m\scrO,e_\infty)\), the coordinate points
\[
(m\scrO_1,e_\infty)\in H_{K,\scrO_1},
\qquad
(m\scrO_2,e_\infty)\in H_{K,\scrO_2}
\]
are good for their respective models.  Both points generate the same
observed data path
\begin{equation}
\overline Y_p
:=
\scrB_pm^\top+e_p,
\label{app:eq:common-data-v4}
\end{equation}
because
\[
\scrB_p\scrO_i(m\scrO_i)^\top+e_p
=
\scrB_pm^\top+e_p,
\qquad i=1,2.
\]

Let \(\overline{\mathcal H}_p\) be the selected sample eigenvector matrix
computed from \(\overline Y_p\overline Y_p^\top/n\).  The leading \(q\)
population eigenvalues are simple and \(p^{-1/2}\scrB_p\scrO_i\) has orthonormal
columns.  Hence, for diagonal sign matrices \(D_{i,p}\),
\[
\nu_{p\times q}(\Sigma_{p,\scrO_i})
=
\mathcal B_{p,\scrO_i}
=
p^{-1/2}\scrB_p\scrO_iD_{i,p},
\qquad i=1,2.
\]
Write the two alignment targets at the coordinate points above as
\[
\Xi_p^{(i)}
:=
\overline{\mathcal H}_p^\top
p^{-1/2}\scrB_p\scrO_iD_{i,p},
\qquad i=1,2.
\]
Because the two coordinate points belong to their respective good sets and
the data in \eqref{app:eq:common-data-v4} are identical,
\begin{align}
\limsup\limits_{p\uparrow\infty}
\|\Xi_p^{(1)}-f_p(\overline Y_p)\|_2
&\leq\epsilon\psi_*,
\\
\limsup\limits_{p\uparrow\infty}
\|\Xi_p^{(2)}-f_p(\overline Y_p)\|_2
&\leq\epsilon\psi_*,
\end{align}
where the two scales agree:
\[
\psi_*
=
\left(
\frac{\lambda_{\min}((m\scrO_i)^\top(m\scrO_i))}
{\lambda_{\min}((m\scrO_i)^\top(m\scrO_i))+\delta^2}
\right)^{1/2}
=
\left(
\frac{\lambda_{\min}(m^\top m)}
{\lambda_{\min}(m^\top m)+\delta^2}
\right)^{1/2}
>0.
\]
The triangle inequality therefore gives
\begin{equation}
\limsup\limits_{p\uparrow\infty}
\|\Xi_p^{(1)}-\Xi_p^{(2)}\|_2
\leq2\epsilon\psi_*.
\label{app:eq:model-pairwise-upper-v4}
\end{equation}
On the other hand, since \(D_{1,p}\) is a diagonal sign matrix,
\(D_{1,p}^{-1}=D_{1,p}\).  Hence
\[
\Xi_p^{(2)}
=
\Xi_p^{(1)}
\left(D_{1,p}\scrO_1^\top\scrO_2D_{2,p}\right),
\]
and hence
\[
\Xi_p^{(1)}-\Xi_p^{(2)}
=
\Xi_p^{(1)}
\left(I_q-D_{1,p}\scrO_1^\top\scrO_2D_{2,p}\right).
\]
Since the first coordinate point lies in \(Z_{\scrO_1}\),
equation~\eqref{eq:xi-smallest-limit-v4} gives
\[
\lim\limits_{p\uparrow\infty}\sigma_{\min}(\Xi_p^{(1)})=\psi_*.
\]
Therefore
\begin{align}
\liminf\limits_{p\uparrow\infty}
\|\Xi_p^{(1)}-\Xi_p^{(2)}\|_2
&\geq
\psi_*
\liminf\limits_{p\uparrow\infty}
\left\|
I_q-D_{1,p}\scrO_1^\top\scrO_2D_{2,p}
\right\|_2.
\label{app:eq:model-pairwise-lower-v4}
\end{align}
Write \(\scrO_i=R(\gamma_i)\oplus I_{q-2}\), set
\(\theta=\gamma_2-\gamma_1\), and let
\(\widetilde D_{i,p}\) be the leading \(2\times2\) block of \(D_{i,p}\).
The leading block of
\(D_{1,p}\scrO_1^\top\scrO_2D_{2,p}\) is
\[
\widetilde D_{1,p}R(\theta)\widetilde D_{2,p}.
\]
If this real orthogonal block has determinant \(-1\), its eigenvalues are
\(1\) and \(-1\), so its distance from \(I_2\) in operator norm is \(2\).
If its determinant is \(1\), the two sign matrices have the same determinant.
When both are \(\pm I_2\), the block is \(\pm R(\theta)\); when each has
one positive and one negative diagonal entry, the block is
\(\pm R(-\theta)\).  Thus it belongs to
\(\{R(\theta),R(-\theta),-R(\theta),-R(-\theta)\}\), whose distances from
\(I_2\) in operator norm are either \(2|\sin(\theta/2)|\) or
\(2|\cos(\theta/2)|\).  Taking the leading \(2\times2\) block therefore
gives, uniformly over the diagonal sign matrices,
\begin{align*}
\left\|
I_q-D_{1,p}\scrO_1^\top\scrO_2D_{2,p}
\right\|_2
&\geq
\left\|I_2-\widetilde D_{1,p}R(\theta)\widetilde D_{2,p}\right\|_2
\\
&\geq
2\min\left\{
\left|\sin\left(\frac{\theta}{2}\right)\right|,
\left|\cos\left(\frac{\theta}{2}\right)\right|
\right\}
>2\epsilon.
\end{align*}
The strict inequality follows from the angle-packing lemma.
The lower bound in \eqref{app:eq:model-pairwise-lower-v4} is therefore
strictly larger than \(2\epsilon\psi_*\), contradicting
\eqref{app:eq:model-pairwise-upper-v4}.  This proves the pairwise
disjointness.

\paragraph{Worst-model probability bound.}
For every nonempty compact \(K\subset\mathbb R^{n\times q}\), pairwise
disjointness and the preceding bounds give
\begin{align*}
1
&\geq\sum_{\scrO\in\mathbb G_\epsilon}\mu(F_{K,\scrO})
\overset{\eqref{app:eq:model-change-v4}}{\geq}
\sum_{\scrO\in\mathbb G_\epsilon}c_{\scrO}(K)\,\mu(H_{K,\scrO})
\\*
&\geq
\left(
\inf_{\scrO'\in\mathbb G_\epsilon}
\mathbb P\bigl(A_{f,\epsilon}(\scrO')\bigr)
-\mathbb P(M\notin K)
\right)
\sum_{\scrO\in\mathbb G_\epsilon}c_{\scrO}(K).
\end{align*}
Since the sum of the \(c_{\scrO}(K)\) is strictly positive, rearranging yields
\[
\inf_{\scrO\in\mathbb G_\epsilon}
\mathbb P\bigl(A_{f,\epsilon}(\scrO)\bigr)
\leq\mathbb P(M\notin K)
+\left(\sum_{\scrO\in\mathbb G_\epsilon}c_{\scrO}(K)\right)^{-1}.
\]
Taking the infimum over \(K\), and noting that the probability on the left
is at most \(1\), proves \eqref{eq: approx-theorem-bound} with
\(u(\epsilon)\) as defined in \eqref{eq: u-epsilon}.

\paragraph{Limit of the bound.}
For \(r>0\), let
\(K_r:=\{m\in\mathbb R^{n\times q}:\|m\|_2\leq r\}\).
For every \(\scrO\in O(q)\),
\[
c_{\scrO}(K_r)
=\inf_{m\in K_r}\frac{g(m\scrO^\top)}{g(m)}
\geq
\frac{\displaystyle\min_{m\in K_r}g(m\scrO^\top)}
{\displaystyle\max_{m\in K_r}g(m)}.
\]
Since \(\|m\scrO^\top\|_2=\|m\|_2\), right multiplication by
\(\scrO^\top\) maps \(K_r\) bijectively onto itself.  Therefore,
\[
\min_{m\in K_r}g(m\scrO^\top)=\min_{m\in K_r}g(m).
\]
Writing
\[
g_r^-:=\min_{m\in K_r}g(m),
\qquad g_r^+:=\max_{m\in K_r}g(m),
\]
continuity and strict positivity of \(g\), together with compactness of
\(K_r\), give \(0<g_r^-\leq g_r^+<\infty\).  Hence
\[
c_{\scrO}(K_r)\geq\frac{g_r^-}{g_r^+}
\qquad\text{for every }\scrO\in O(q).
\]
Using \(K_r\) in the definition of \(u(\epsilon)\) and
\(|\mathbb G_\epsilon|=N(\epsilon)\), we obtain
\begin{align*}
u(\epsilon)
&\leq\mathbb P(\|M\|_2>r)
+\left(\sum_{\scrO\in\mathbb G_\epsilon}c_{\scrO}(K_r)\right)^{-1}
\\
&\leq\mathbb P(\|M\|_2>r)+\frac{g_r^+}{N(\epsilon)g_r^-}.
\end{align*}
For each fixed \(r\), the relation
\(\lim\limits_{\epsilon\downarrow0}N(\epsilon)=\infty\) therefore yields
\[
\limsup\limits_{\epsilon\downarrow0}u(\epsilon)
\leq\mathbb P(\|M\|_2>r).
\]
Finally, \(\lim\limits_{r\uparrow\infty}\mathbb P(\|M\|_2>r)=0\)
and \(u(\epsilon)\geq0\) imply
\(\lim\limits_{\epsilon\downarrow0}u(\epsilon)=0\).
\end{proof}

\newpage
\bibliographystyle{agsm}
\bibliography{quadopt}

\end{document}